\def\qed{\hfill {\hbox{${\vcenter{\vbox{               
   \hrule height 0.4pt\hbox{\vrule width 0.4pt height 6pt
   \kern5pt\vrule width 0.4pt}\hrule height 0.4pt}}}$}}}
\def\utr{\, \underline{\triangleright}\, }
\def\otr{\, \overline{\triangleright}\, }
\newtheorem{theorem}{Theorem}
\newtheorem{proposition}[theorem]{Proposition}
\newtheorem{corollary}[theorem]{Corollary}
\theoremstyle{definition}
\newtheorem{example}{Example}
\newtheorem{definition}{Definition}
\newtheorem{remark}{Remark}
\date{}
\title{\Large \textbf{Biquandle Arrow Weight Enhacements}}
\author{Sam Nelson\footnote{Email: Sam.Nelson@cmc.edu. Partially supported by Simons Foundation collaboration grant 702597.}\and
Migiwa Sakurai\footnote{Email: migiwa@shibaura-it.ac.jp}}
\begin{document}
\maketitle

\begin{abstract}
We introduce a new infinite family of enhancements of the biquandle homset 
invariant called biquandle arrow weights. These invariants assign weights in an 
abelian group to intersections of arrows in a Gauss diagram representing a
classical or virtual knot depending on the biquandle colors associated to the 
arrows. We provide examples to show that the enhancements are nontrivial and
proper, i.e., not determined by the homset cardinality.
\end{abstract}

\parbox{6in} {\textsc{Keywords:} Biquandles, homsets, enhancements, virtual 
knots, Gauss diagrams, biquandle arrow weights

\smallskip

\textsc{2020 MSC:} 57K12}

\section{\large\textbf{Introduction}}\label{I}

A combinatorial approach to knot theory involves representing knots as
equivalence classes of diagrams. The most common such scheme uses \textit{knot
diagrams}, 4-valent graphs with vertices enhanced with crossing information
so the graph can be interpreted as a projection of the knot in space onto 
the plane of the paper with only double-point singularities represented by 
the vertices. A knot can then be identified as an equivalence class of knot
diagrams under the equivalence relation generated by the Reidemeister moves 
and planar isotopy.

There are other combinatorial diagrammatic ways to represent knots, however,
such as the triple-point diagrams and \"uber-crossing diagrams in \cite{A} 
and the Gauss 
diagrams used in \cite{GPV}, the latter of which we use in this paper. 
Equivalence classes of Gauss diagrams under the Gauss diagram Reidemeister 
moves are known as \textit{virtual knots}, a larger category which includes 
classical knots as a subcategory. In papers such as \cite{C,OS}, invariants 
of virtual knots such as the affine index polynomial
are computed from Gauss diagrams using crossings of arrows, enigmatic features
of Gauss diagrams which don't correspond to anything obvious in the usual 
diagrams of knots. In this paper we adapt the idea from \cite{CJKLS} of 
assigning weights to crossings in diagrams colored with an algebraic structure,
in our case finite biquandles, such that the sum of the weights is preserved
by Reidemeister moves. The resulting multiset of weight values over the set of 
colorings of a Gauss diagram is therefore an invariant of virtual (and hence 
classical) knots for each finite biquandle and biquandle arrow weight. 

The paper is organized as follows. In Section \ref{R} we review the basics of 
biquandles and Gauss diagrams. In Section \ref{BW} we define biquandle arrow 
weight systems and define the new family of invariants, including our main 
result showing that the resulting enhancement of the counting invariant 
is in fact an invariant of oriented classical and virtual knots. In Section 
\ref{E} we collect some examples and in particular show that the enhancement 
is proper, i.e. not determined by the counting invariant. We conclude in 
Section \ref{Q} with some questions for future research. The first listed 
author thanks the second listed author and Shibaura Institute of Technology
for their kind hospitality during the preparation of this paper.

\section{\large\textbf{Biquandles and Gauss Diagrams}}\label{R}

We recall the definition of biquandles; see \cite{EN} for more.

\begin{definition}
A \textit{biquandle} is a set $X$ with operations 
$\utr$, $\otr : X \rightarrow X$ 
which satisfies for all $x, y, z  \in X$

\begin{itemize}
\item[(i)] $x \ \utr \ x = x \ \otr \ x$,
\item[(ii)]  the maps $\alpha_x$, $\beta_x : X \rightarrow X$ and $S : X \times X \rightarrow X \times X$ defined by
$$
\alpha_x(y) = y \ \otr \ x, \ \ \beta_x(y) = y \ \utr \ x \ \ and \ \ S(x, y) = (y \ \otr \  x, x \ \utr \ y)
$$
are invertible, and
\item[(iii)] we have the exchange laws
\begin{align*}
(x  \ \utr \  y)  \ \utr \  (z  \ \utr \  y) &= (x  \ \utr \  z) \ \utr \ (y  \ \otr \ z),\\
(x \ \utr \ y) \ \otr \ (z \ \utr \ y) &= (x \ \otr \ z) \ \utr \ (y \ \otr \ z)\mbox{, and}\\
(x  \ \otr \  y)  \ \otr \  (z  \ \otr \  y) &= (x  \ \otr \  z)  \ \otr \  (y \ \utr \ z).
\end{align*}
\end{itemize}
\end{definition}

A \textit{biquandle coloring} of an oriented knot or link diagram $D$ by a 
biquandle $X$, also known as an \textit{$X$-coloring}, is an assignment of 
an element of $X$ to each semiarc in $D$ such that at every crossing we have 
the condition
\[\scalebox{0.6}{\includegraphics{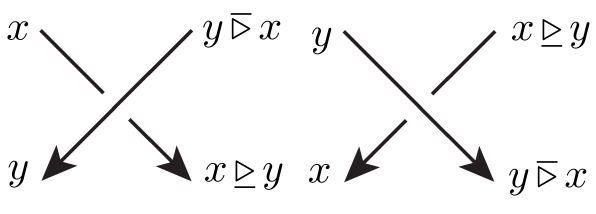}}\]
known as the \textit{biquandle coloring condition}.

The biquandle axioms are chosen so that for every biquandle coloring of a 
diagram before a Reidemeister move, there is a unique coloring of the diagram 
after the move which coincides with the original coloring outside the 
neighborhood of the move. It then follows that the number of colorings of an
oriented knot or link diagram by a finite biquandle $X$ is an integer-valued
invariant, denoted $\Phi_X^{\mathbb{Z}}$ and called the \textit{biquandle 
counting invariant}.

\begin{remark}\label{rem1}
More precisely, to every oriented knot or link $K$ there is an associated
biquandle called the \textit{fundamental biquandle} or the
\textit{knot biquandle}, denoted $\mathcal{B}(K)$, which can be described 
from any diagram of $K$ with a presentation consisting of generators 
associated to semiarcs and relations at the crossings. Any $X$-coloring 
then determines and is determined by a unique \textit{biquandle homomorphism}
$f\in\mathrm{Hom}(\mathcal{B}(K),X)$. This set of biquandle homomorphisms,
known as a \textit{homset}, is an invariant of $K$. Choosing a different
diagram for $K$ changes the representations of each homset element, with colored
diagrams representing the same homset element if they are related by $X$-colored
Reidemeister moves. We can think of choosing a diagram for $K$ as analogous 
to choosing input and output bases for vector spaces; then $X$-colored diagrams
represent homset elements analogously to the way matrices represent linear 
transformations, and Reidemeister moves play the role of change of basis 
matrices.
\end{remark}

Next, we recall the definition of Gauss diagrams.
Let $K$ be a virtual knot and $D$ a virtual knot diagram of $K$.  
Then, $D$ is regarded as the image $f(\mathbb{S}^1)$ of a generic immersion 
$f : \mathbb{S}^1$ $\to$ $\mathbb{R}^2$.  A {\it Gauss diagram} for $D$ is 
the preimage of $D$ with chords, each of which connects the preimages of 
each real crossing. We suppose that virtual knot diagrams are oriented. We 
specify over/under information of each real crossing on the corresponding 
chord by directing the chord toward the under path and assigning each chord 
with the sign of the crossing as shown: 
\[\scalebox{0.6}{\includegraphics{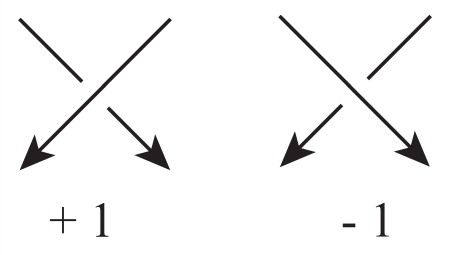}}\]

It is well-known that there exists a bijection from the set of virtual 
knots to the set of equivalence classes of their Gauss diagrams by the 
\textit{generalized Reidemeister moves of Gauss diagrams}: 
\[\includegraphics[width=8cm,clip]{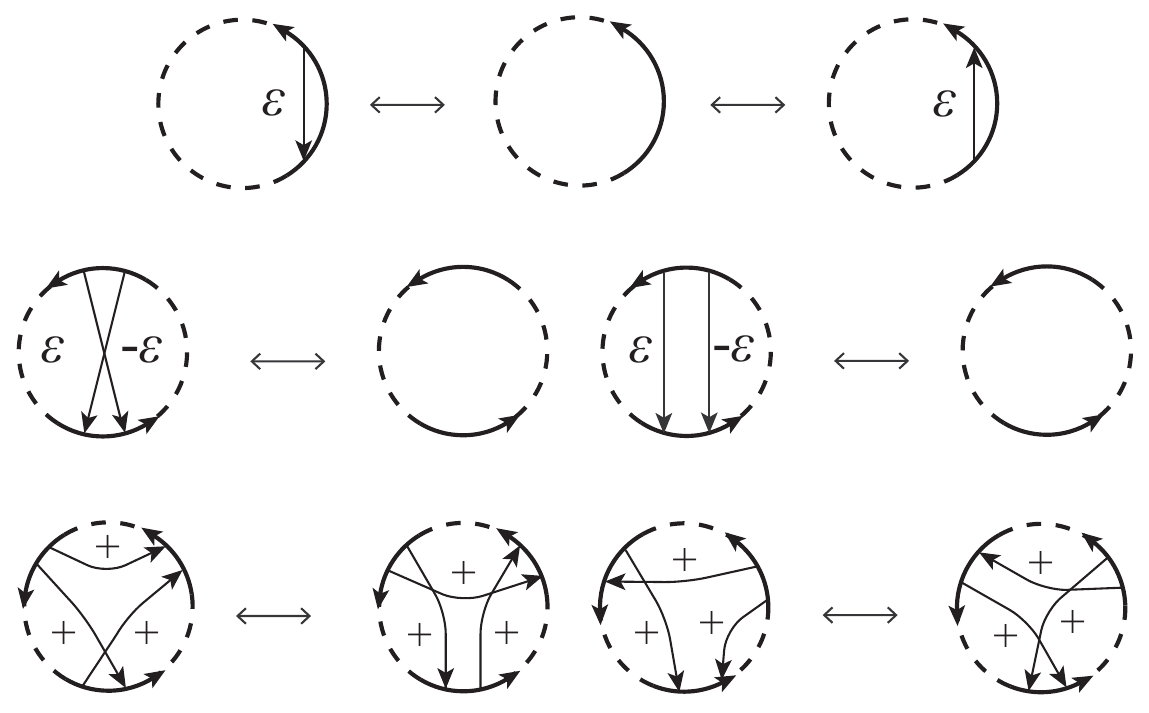}\]
We identify a  virtual knot with an equivalence class of Gauss diagrams 
under these moves.

A \textit{biquandle coloring} of a Gauss diagram by a biquandle $X$ is an 
assignment of an element of $X$ to each segment of the circle between the 
arrowheads and tails such that at every arrow we have
\[\scalebox{0.8}{\includegraphics{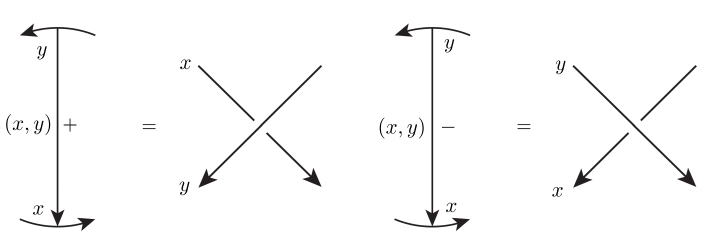}}\]
as shown. In particular, to each arrow in a biquandle-colored Gauss diagram
we associate an ordered pair $(x,y)$ of biquandle elements together with a 
sign.

As in Remark \ref{rem1}, the set of biquandle colorings of a Gauss diagram
$D$ representing a virtual knot $K$ by a biquandle $X$ can be identified 
with the homset $\mathrm{Hom}(\mathcal{B}(K),X)$, with each coloring 
representing a homset element and with our choice of diagram $D$ analogous to
a choice of basis for a vector space.

\section{\Large\textbf{Biquandle Arrow Weights}}\label{BW}

We begin with a definition.

\begin{definition}\label{def:BAW}
Let $X$ be a biquandle and $A$ an abelian group. A \textit{biquandle 
arrow weight} is a function $\phi:X^4\to A$ satisfying the
conditions
\begin{itemize}
\item[(i)] For all $x,y,u,v\in X$ we have 
\[\phi((x,y),(u,v))=\phi((u,v),(x,y)),\]
\item[(ii)] For all $x,y\in X$, $\phi((x,y),(x,y))=0,$
\item[(iii)] 
For all $x,y,z\in X$ we have
\[\phi((x,y),(y,z))=\phi((x,z),(y\otr x,z\otr x))+\phi((x,z),(x\utr z,y\utr z))\]
and
\item[(iv)]
For all $x,y,z\in X$ we have
\[\phi((x\utr z,y\utr z),(y\otr x,z\otr x))
=\phi((x,y),(x\utr y,z\otr y))+\phi((y,z),(x\utr y,z\otr y))\]
\item[(v)] For all $u,v,x,y,z\in X$ we have
\begin{eqnarray*}
\phi((u,v),(x,y))+\phi((u,v),(y,z))
& = & \phi((u,v),(x\utr z,y\utr z))+\phi((u,v),(y\otr x, z\otr x)) \\
\phi((u,v),(x,z))+\phi((u,v),(y\otr x, z\otr x)) 
& = & \phi((u,v),(y,z))+\phi((u,v),(x\utr y,z\otr y)) \\
\phi((u,v),(x,y))+\phi((u,v),(x\utr y,z\otr y)) 
& = & \phi((u,v),(x,z))+\phi((u,v),(x\utr z,y\utr z)). 
\end{eqnarray*}
\end{itemize}
\end{definition}

The motivation for this definition is the following: let $D$ be a Gauss diagram
with coloring by a biquandle $X$. Whenever two arrows with biquandle
colors $(x,y)$ and $(u,v)$ and signs $\epsilon$ and $\epsilon'$ cross, we 
want to assign a weight of $\epsilon \epsilon'\phi((x,y),(u,v))$ to the
crossing point. For example, in the case
\[\includegraphics{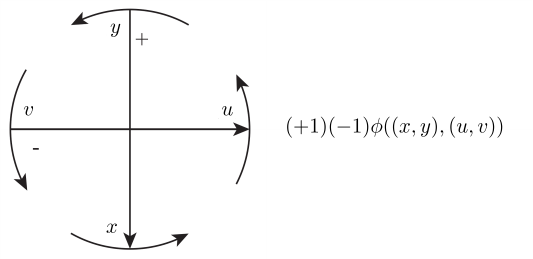}\]
we have a weight of $(+1)(-1)\phi((x,y),(u,v))=-\phi((x,y),(u,v))$.
The conditions in Definition \ref{def:BAW} are then the 
conditions needed for invariance of the sum of weights in a diagram under
the Gauss diagram Reidemeister moves. More precisely, we have:

\begin{proposition}\label{prop:main}
Let $X$ be a biquandle, $Y$ a biquandle arrow weight and $D$ a Gauss diagram
with a choice of $X$-coloring. Then the sum of $\phi$-values over all arrow 
crossings in $D$, denoted $\Sigma_D$, is not changed by $X$-colored 
Reidemeister moves.
\end{proposition}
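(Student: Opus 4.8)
The plan is to check invariance one generalized Gauss diagram Reidemeister move at a time. Since an $X$-colored move alters $D$ only inside a small disk and, by the biquandle axioms recalled above, the $X$-coloring outside that disk extends uniquely across the move, we may write $\Sigma_D$ as the (unchanged) sum of weights over arrow crossings disjoint from the disk plus a local sum over those crossings that involve at least one arrow created, destroyed, or recolored by the move; only this local sum can change, so it suffices to verify that it is the same before and after. The purely virtual moves of the calculus create no chords and reorder no chord endpoints, so they leave every arrow crossing untouched and $\Sigma_D$ is trivially preserved; this leaves the three classical moves R1, R2, R3 in all of their orientation and sign variants.

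R1 is immediate: the arrow it creates or removes has adjacent endpoints on the circle, hence crosses no other arrow and contributes $0$ to $\Sigma_D$, so nothing beyond the biquandle axioms is needed here. For R2, the move adds or removes a pair of arrows carrying opposite signs whose biquandle color pairs are related through the coloring condition at the two crossings of the bigon; one observes that every other arrow of $D$ crosses both new arrows or crosses neither, and that the two new arrows cross one another or not according to the orientation of the bigon. Combining these observations with the opposite signs, the symmetry condition (i), and the diagonal-vanishing condition (ii) forces the local sum to be unchanged.

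The substance of the argument is R3. All three arrows involved persist, but two effects must be tracked: the biquandle color pairs of the arrows change, because the strand sliding across the far crossing picks up new intermediate colors from the coloring condition, and the cyclic order of the six endpoints on the circle changes, which alters both which pairs among the three arrows cross and which of the three each exterior arrow crosses. The plan is to fix one representative sign/orientation configuration, read off the ``before'' and ``after'' color pairs from the coloring condition, tabulate exactly which internal and external crossings occur in each picture, and match the two weight sums term by term; this matching is precisely the content of conditions (iii) and (iv), with condition (i) used to identify weights that differ only by the order of their two arrows. The remaining R3 variants should either reduce to the representative one using the symmetries already present in Definition \ref{def:BAW} or follow from it together with the already-verified R2 move. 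I expect the only genuinely delicate point to be the bookkeeping of these endpoint orders across the various R3 pictures; once the correct ``before/after'' tables are in hand, conditions (i)--(iv) do the rest.
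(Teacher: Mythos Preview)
Your proposal is correct and mirrors the paper's proof: both check invariance against a generating set of Gauss-diagram Reidemeister moves (the paper cites \cite{P} to reduce to all R1, all R2, and the single all-positive R3), handling R1 trivially, R2 by sign cancellation together with axiom (ii), and R3 via axioms (iii) and (iv). The only sharpenings the paper adds are that the all-positive R3 still comes in two distinct cyclic orderings of its six endpoints on the circle, one yielding (iii) and the other (iv)---so your ``one representative configuration'' is really two Gauss-diagram pictures---and that, since those six endpoints are consecutive, no exterior arrow can cross any of the three R3 arrows, making the external R3 bookkeeping you anticipate vacuous.
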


\begin{proof}
This is a matter of checking the statement for a generating set of 
Reidemeister moves. We note that one such set consists of all four RI moves,
all four RII moves and the RIII move with all positive crossings; see \cite{P}.

First, we observe that there's no particular ordering on the set of arrows, 
so we set 
\[\phi((x,y),(u,v))=\phi((u,v),(x,y))\]
as axiom (i).

The RI moves involve introduction or removal of single arrows which do not
cross other arrows, so the contribution to weight sum the on both sides of 
the move is the same, namely zero, and the RI move conditions are
satisfied.
\[\scalebox{0.7}{\includegraphics{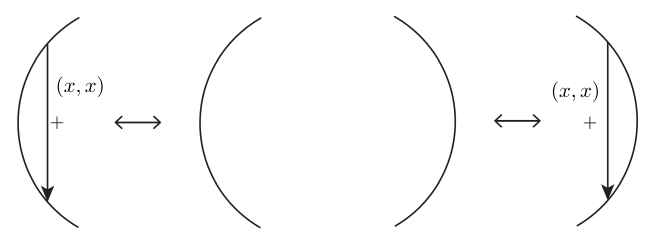} \includegraphics{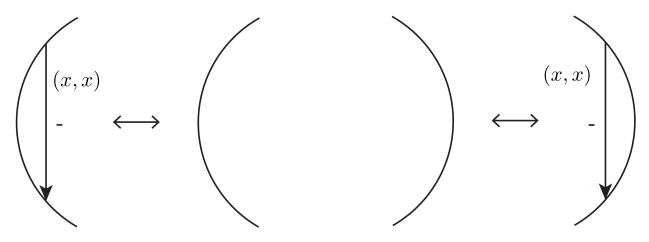}}\]

The RII moves involve introduction or removal of pairs of arrows with opposite
signs and the same pair of biquandle colors, either crossing or not.
These moves impose two axioms on the weights. 
\[\includegraphics{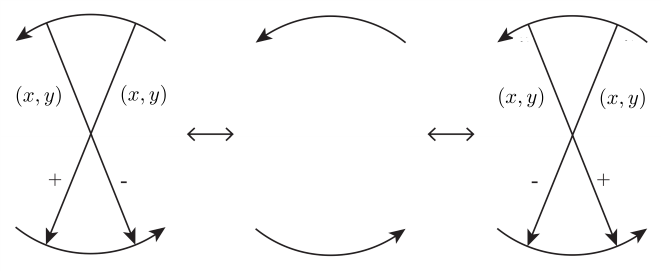}\]
For the RII moves where the arrows cross, we need the crossing weight to be 
zero; since these two arrows both have biquandle colors $(x,y)$, we set 
\[\phi((x,y),(x,y))=0\]
and obtain axiom (ii). For the RII moves where the arrows don't cross, the
weight contributions from the two arrows of the move on both sides are zero.
\[\includegraphics{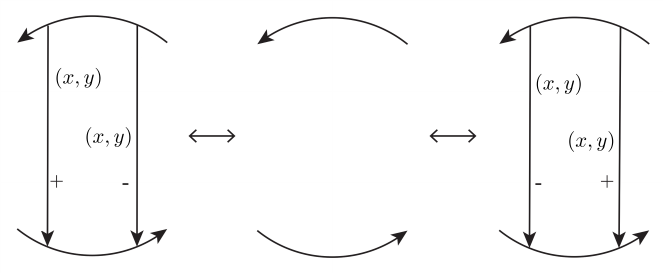}\]

More generally, the two arrows of either type of RII move will both cross the 
same set of arrows but with opposite signs
\[\includegraphics{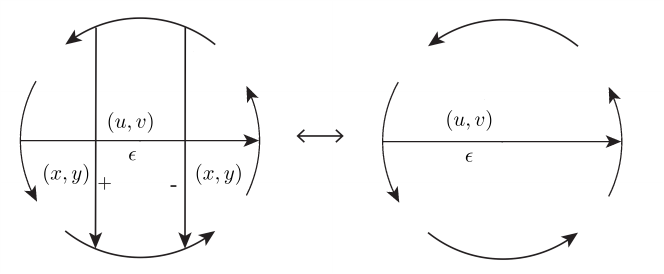}\]
so we arrange for the weights from the positive and negative signed arrows 
of the RII move to cancel by our weighting rule.

Our chosen generating set of Reidemeister moves contains only one RIII move,
but the two cyclic orderings of the three strands of the move impose different
conditions on biquandle arrow weights. Labeling the strands as shown
\[\includegraphics{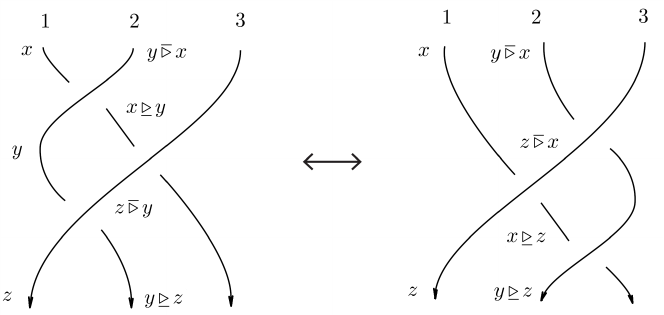}\]
we have one case 
\[\includegraphics{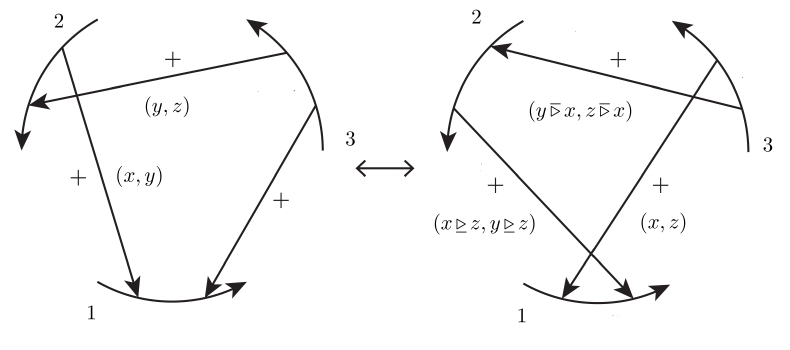}\]
which gives us axiom (iii) and the other
\[\includegraphics{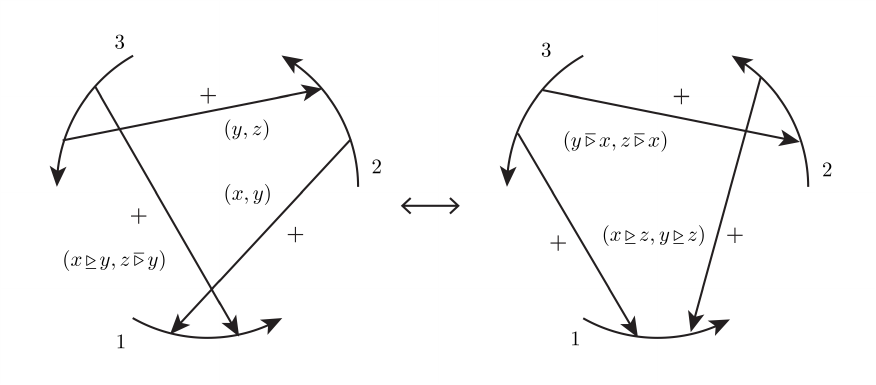}\]
which yields axiom (iv).

Finally, we must consider the crossings of other arrows with the arrows 
involved 
in the Reidemeister III moves. We note that such arrows cross a pair of arrows 
on each side of the move and that it suffices to consider positive arrows since 
negative arrows yield equivalent equations. We further note that the two cyclic 
orderings of the move produce the same requirements, namely the conditions in
axiom (v). We illustrate the case of the first equation in axiom (v); the 
others are similar.
\[\includegraphics{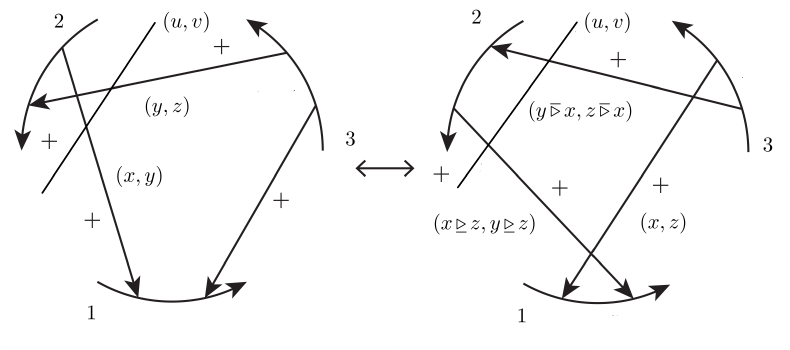}\]

Since this set of moves is a generating set, it follows that that the list of 
axioms obtained from it suffices to guarantee invariance.
\end{proof}

We can represent a biquandle arrow weight as a 4-tensor or matrix of matrices 
with values in the coefficient ring such that $\phi((i,j),(k,l))$ is the row 
$k$ column $l$ entry in the matrix in row $i$ column $j$.

\begin{example}\label{ex:1}
Our \texttt{python} computations indicate that the 4-tensor
\[\left[
\begin{array}{rr}
\left[\begin{array}{rr} 0 & 3 \\ 3 & 0\end{array}\right] &
\left[\begin{array}{rr} 3 & 0 \\ 6 & 9\end{array}\right] \\ & \\
\left[\begin{array}{rr} 3 & 6 \\ 0 & 9 \end{array}\right] &
\left[\begin{array}{rr} 0 & 9 \\ 9 & 0\end{array}\right] 
\end{array}\right]\]
defines a biquandle arrow weight over the biquandle structure on the set 
$X=\{1,2\}$ specified by the operation tables
\[
\begin{array}{r|rr}
\utr & 1 & 2 \\ \hline
1 & 2 & 2 \\
2 & 1 & 1
\end{array}
\quad 
\begin{array}{r|rr}
\otr & 1 & 2 \\ \hline
1 & 2 & 2 \\
2 & 1 & 1
\end{array}
\]
with coefficient group $A=\mathbb{Z}_{12}$. Then for instance we have
$\phi((1,2),(1,1))=3$ and $\phi((1,1),(2,2))=0.$
\end{example}

\begin{definition}
Let $X$ be a finite biquandle and $W$ a biquandle arrow weight with values
in an abelian group $A$. We define the \textit{biquandle arrow weight 
multiset} for Gauss diagram $G$ to be the multiset of $\Sigma_D$ values over 
the set of biquandle colorings $D$ of $G$,
\[\Phi_X^{W,M}(G)=\{\Sigma_D\ |\ D\in\mathrm{Hom}(\mathcal{B}(G),X)\}.\] 
We define the \textit{biquandle arrow weight polynomial} of $G$ to be the 
expression
\[\Phi_X^W(G)=\sum_{D\in\mathrm{Hom}(\mathcal{B}(G),X)} u^{S_D}\]
for a formal variable $u$.
\end{definition}

\begin{corollary}
$\Phi_X^{W,M}(G)$ and $\Phi_X^W(G)$ are invariants of oriented virtual (and 
hence classical) knots and links.
\end{corollary}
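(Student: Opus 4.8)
The plan is to deduce the corollary directly from Proposition~\ref{prop:main} together with the bijective correspondence between $X$-colorings of a Gauss diagram and elements of the homset $\mathrm{Hom}(\mathcal{B}(G),X)$ recalled in Remark~\ref{rem1}. The key observation is that each of the two proposed invariants is built only from data that Proposition~\ref{prop:main} already controls: the multiset $\Phi_X^{W,M}(G)$ is the image of the homset under the assignment $D\mapsto\Sigma_D$, and $\Phi_X^W(G)$ is the generating function of that same multiset. So the corollary is really a bookkeeping statement: invariance of each $\Sigma_D$ along a fixed coloring, plus the fact that the set of colorings is itself an invariant, forces the multiset (and hence the polynomial) to be invariant.

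Concretely, I would argue as follows. Let $G$ and $G'$ be two Gauss diagrams related by a single generalized Reidemeister move, and let $K$ be the common virtual knot they represent. By the identification in Remark~\ref{rem1}, there is a bijection $\Psi:\mathrm{Hom}(\mathcal{B}(K),X)\to\{\text{$X$-colorings of }G\}$, and likewise $\Psi':\mathrm{Hom}(\mathcal{B}(K),X)\to\{\text{$X$-colorings of }G'\}$; moreover the move sends a coloring $D=\Psi(f)$ of $G$ to the coloring $D'=\Psi'(f)$ of $G'$ that agrees with $D$ outside the neighborhood of the move. Proposition~\ref{prop:main} says precisely that $\Sigma_{D}=\Sigma_{D'}$ for each such pair. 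Therefore the map $f\mapsto\Sigma_{\Psi(f)}$ equals the map $f\mapsto\Sigma_{\Psi'(f)}$ as functions on $\mathrm{Hom}(\mathcal{B}(K),X)$, so they have the same multiset of values: $\Phi_X^{W,M}(G)=\Phi_X^{W,M}(G')$. Summing $u^{S_D}$ over that multiset then gives $\Phi_X^W(G)=\Phi_X^W(G')$. Since the generalized Reidemeister moves generate virtual knot equivalence, both quantities depend only on $K$, and the classical case follows because classical knots embed in virtual knots. The same argument applies verbatim to links by allowing Gauss diagrams with several circles.

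There is no real obstacle here; the only subtlety worth stating carefully is that invariance of $\Sigma_D$ must be tracked \emph{along a fixed homset element} rather than merely as an invariance of the unordered collection of $\Sigma$-values for a single diagram --- but this is exactly what Proposition~\ref{prop:main} provides, since it fixes the $X$-coloring through the move. One should also note in passing that the multiset (not just the set) is well defined because the bijection $\Psi$ is onto, so no value is over- or under-counted. Given these remarks the proof is a couple of lines, and I would present it at that level of brevity.
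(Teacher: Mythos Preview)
Your argument is correct and follows the same route as the paper: the paper's own proof is the single sentence ``This follows immediately from Proposition~\ref{prop:main},'' and you have simply unpacked that sentence by spelling out the bijection between colorings before and after a move and noting that the multiset (hence the polynomial) is thereby preserved. There is no substantive difference in approach, only in level of detail.
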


\begin{proof}
This follows immediately from Proposition \ref{prop:main}.
\end{proof}

\section{\Large\textbf{Examples}}\label{E}

In this section we collect a few examples of the new invariants.

\begin{example}\label{ex2}
Let us start with a basic illustration of how to compute the invariant. 
Let $X$ be the biquandle structure on the set $X=\{1,2\}$ defined by the
operation tables 
\[
\begin{array}{r|rr}
\utr & 1 & 2 \\ \hline
1 & 2 & 2 \\
2 & 1 & 1
\end{array}
\quad 
\begin{array}{r|rr}
\otr & 1 & 2 \\ \hline
1 & 2 & 2 \\
2 & 1 & 1
\end{array}
\]
and let $W$ be the biquandle arrow weight with coefficients in $A=\mathbb{Z}_8$
given by the 4-tensor 
\[\left[
\begin{array}{rr}
\left[\begin{array}{rr} 0 & 2 \\ 6 & 4\end{array}\right] &
\left[\begin{array}{rr} 2 & 0 \\ 0 & 2\end{array}\right] \\ & \\
\left[\begin{array}{rr} 6 & 0 \\ 0 & 6 \end{array}\right] &
\left[\begin{array}{rr} 4 & 2 \\ 6 & 0\end{array}\right] 
\end{array}\right]\]
with coefficients in $\mathbb{Z}_8$. 
Let us consider the virtual knot 4.72, given by the Gauss diagram
\[\includegraphics{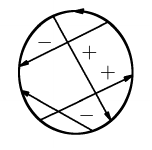}.\]
There are two $X$-colorings of this diagram,
\[\includegraphics{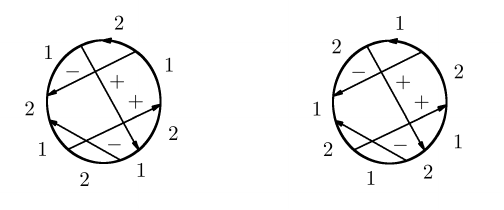}.\]

For the coloring on the left we have biquandle arrow weight
\[\phi((1,1,+),(2,1,-))+\phi((2,2,+),(1,2,-))+\phi((2,2,+),(1,1,+))=-6-2+4=4\]
and on the right we have
\[\phi((2,2,+),(1,2,-))+\phi((1,1,+),(2,1,-))+\phi((1,1,+),(2,2,+))=-2-6+4=4.\]
Then the multiset version of the invariant is 
\[\Phi_X^{W,M}(4.72)=\{4,4\}\]
and the polynomial version is
\[\Phi_X^{W}(4.72)=2u^4.\]
We note that since the unknot $0_1$ has values respectively 
$\Phi_X^{W,M}(0_1)=\{0,0\}$ and $\Phi_X^{W}(0_1)=2$, this example shows that
the invariant is nontrivial since it distinguishes 4.72 from the unknot.
Moreover, this example shows that the invariant is a proper enhancement,
i.e., it determines but is not determined by the biquandle counting invariant.
\end{example}

\begin{example}
Let $X$ be the biquandle in Example \ref{ex2}. We randomly selected two 
biquandle arrow weights with coefficients in $\mathbb{Z}_4$ 
\[W_1=\left[\begin{array}{rr}
\left[\begin{array}{rr} 0 & 1 \\ 1 & 0 \end{array}\right] &
\left[\begin{array}{rr} 1 & 0 \\ 2 & 3 \end{array}\right] \\
 & \\
\left[\begin{array}{rr} 1 & 2 \\ 0 & 3\end{array}\right] &
\left[\begin{array}{rr} 0 & 3 \\ 3 & 0\end{array}\right] 
\end{array}\right]
\quad
W_2=\left[\begin{array}{rr}
\left[\begin{array}{rr} 0 & 1 \\ 3 & 2 \end{array}\right] &
\left[\begin{array}{rr} 1 & 0 \\ 0 & 1 \end{array}\right] \\
 & \\
\left[\begin{array}{rr} 3 & 0 \\ 0 & 3 \end{array}\right] &
\left[\begin{array}{rr} 2 & 1 \\ 3 & 0\end{array}\right] 
\end{array}\right]
\]
and computed the
invariant values for the virtual knots with up to four classical crossings
in Jeremy Green's table at the knot atlas \cite{KA} as shown in the table.
\[\begin{array}{r|l}
\Phi_X^{W_1}(K) & K \\ \hline
2 & 3.1, 3.5, 3.6, 3.7, 4.1, 4.2, 4.3, 4.6, 4.7, 4.8, 4.10, 4.12, 4.13, 4.16, 4.17, 4.19, 4.21, 4.23, 4.24, 4.25, 4.26, \\ & 
4.29, 4.30, 4.33, 4.34, 4.37, 4.38, 4.39, 4.40, 4.43, 4.46, 4.47, 4.50, 4.51, 4.53, 4.55, 4.56, 4.60, 4.61, \\ & 
4.62, 4.63, 4.64, 4.69, 4.75, 4.76, 4.77, 4.79, 4.80, 4.85, 4.86, 4.89, 4.90, 4.91, 4.93, 4.96, 4.97, 4.98, \\ & 
4.99,  4.100,  4.102, 4.103, 4.105, 4.106, 4.107, 4.108\\
2u^2 & 2.1, 3.2, 3.2, 3.4, 4.4, 4.5, 4.9, 4.11, 4.14, 4.15, 4.18, 4.20, 4.22, 4.27, 4.28, 4.31, 4.32, 4.35, 4.36, 4.41, \\ &
4.42, 4.44, 4.45, 4.48, 4.49, 4.52, 4.54, 4.57, 4.58, 4.59, 4.65, 4.66, 4.67, 4.68, 4.70, 4.71, 4.72, 4.73, \\ &
4.74, 4.78, 4.81, 4.82, 4.83, 4.84, 4.87, 4.88, 4.92, 4.94, 4.95, 4.101, 4.104
\end{array}\]
\[\begin{array}{r|l}
\Phi_X^{W_2}(K) & K \\ \hline
2 & 2.1, 3.1, 3.2, 3.3, 3.4, 3.5, 3.6, 3.7, 4.1, 4.2, 4.3, 4.4, 4.5, 4.6, 4.7, 4.8, 4.9, 4.10, 4.11, 4.12, 4.13, 4.14, \\ &
4.15, 4.16, 4.17, 4.18, 4.19, 4.20, 4.21, 4.22, 4.23, 4.24, 4.25, 4.26, 4.27, 4.28, 4.43, 4.44, 4.45, 4.46, \\ & 
4.47, 4.48, 4.49, 4.50, 4.51, 4.52, 4.53, 4.54, 4.55, 4.56, 4.73, 4.74, 4.75, 4.76, 4.77, 4.78, 4.79, 4.80, \\ & 
4.81, 4.82, 4.83, 4.84, 4.85, 4.86, 4.87, 4.88, 4.89, 4.90, 4.91, 4.92, 4.93, 4.94, 4.95, 4.96, 4.97, \\ & 
4.98, 4.99, 4.100, 4.101, 4.102, 4.103, 4.104, 4.105, 4.106, 4.107, 4.108\\
2u^2 & 4.29, 4.30, 4.31, 4.32, 4.33, 4.34, 4.35, 4.36, 4.37, 4.38, 4.39, 4.40, 4.41, 4.42, 4.57, 4.58, 4.59, 4.60, \\ &
4.61,  4.62, 4.63, 4.64, 4.65, 4.66, 4.67, 4.68, 4.69, 4.70, 4.71, 4.72
\end{array}\]
\end{example}

\begin{example}
Let $X$ be the biquandle structure on $\{1,2,3\}$ given by the operation tables
\[
\begin{array}{r|rrr}
\utr & 1 & 2 & 3 \\ \hline
1 & 2 & 2 & 2 \\
2 & 3 & 3 & 3 \\
3 & 1 & 1 & 1
\end{array}
\quad 
\begin{array}{r|rrr}
\otr & 1 & 2 & 3 \\ \hline
1 & 2 & 2 & 2 \\
2 & 3 & 3 & 3 \\
3 & 1 & 1 & 1
\end{array}.
\]
Our \texttt{python} computations indicate that the 4-tensors
\[w_3=
\left[\begin{array}{rrr}
\left[\begin{array}{rrr} 0 & 1 & 2 \\ 2 & 0 & 1 \\ 1 & 2 & 0 \end{array}\right] &
\left[\begin{array}{rrr} 1 & 0 & 0 \\ 0 & 1 & 0 \\ 0 & 0 & 1 \end{array}\right] &
\left[\begin{array}{rrr} 2 & 0 & 0 \\ 0 & 2 & 0 \\ 0 & 0 & 2 \end{array}\right] \\
\left[\begin{array}{rrr} 2 & 0 & 0 \\ 0 & 2 & 0 \\ 0 & 0 & 2 \end{array}\right] &
\left[\begin{array}{rrr} 0 & 1 & 2 \\ 2 & 0 & 1 \\ 1 & 2 & 0 \end{array}\right] &
\left[\begin{array}{rrr} 1 & 0 & 0 \\ 0 & 1 & 0 \\ 0 & 0 & 1 \end{array}\right] \\
\left[\begin{array}{rrr} 1 & 0 & 0 \\ 0 & 1 & 0 \\ 0 & 0 & 1 \end{array}\right] &
\left[\begin{array}{rrr} 2 & 0 & 0 \\ 0 & 2 & 0 \\ 0 & 0 & 2 \end{array}\right] &
\left[\begin{array}{rrr} 0 & 1 & 2 \\ 2 & 0 & 1 \\ 1 & 2 & 0  \end{array}\right] \\
\end{array}\right]
\]
and
\[w_4=
\left[\begin{array}{rrr}
\left[\begin{array}{rrr} 0 & 2 & 1 \\ 1 & 0 & 2 \\ 2 & 1 & 0 \end{array}\right] &
\left[\begin{array}{rrr} 2 & 0 & 3 \\ 3 & 2 & 0 \\ 0 & 3 & 2 \end{array}\right] &
\left[\begin{array}{rrr} 1 & 3 & 0 \\ 0 & 1 & 3 \\ 3 & 0 & 1 \end{array}\right] \\
\left[\begin{array}{rrr} 1 & 3 & 0 \\ 0 & 1 & 3 \\ 3 & 0 & 1 \end{array}\right] &
\left[\begin{array}{rrr} 0 & 2 & 1 \\ 1 & 0 & 2 \\ 2 & 1 & 0 \end{array}\right] &
\left[\begin{array}{rrr} 2 & 0 & 3 \\ 3 & 2 & 0 \\ 0 & 3 & 2 \end{array}\right] \\
\left[\begin{array}{rrr} 2 & 0 & 3 \\ 3 & 2 & 0 \\ 0 & 3 & 2 \end{array}\right] &
\left[\begin{array}{rrr} 1 & 3 & 0 \\ 0 & 1 & 3 \\ 3 & 0 & 1 \end{array}\right] &
\left[\begin{array}{rrr} 0 & 2 & 1 \\ 1 & 0 & 2 \\ 2 & 1 & 0 \end{array}\right] \\
\end{array}\right]
\]
define biquandle arrow weight on $X$ with coefficients in 
$\mathbb{Z}_3$ and $\mathbb{Z}_6$ respectively, giving us the following table
of nontrivial invariant values for the prime  virtual knots with up to 4 classical 
crossings in the table at the Knot Atlas \cite{KA}:
\[
\begin{array}{r|l}
\Phi_X^{w_1}(K) & K \\ \hline
3u &  4.10, 4.13, 4.15, 4.18, 4.19, 4.20, 4.24, 4.29, 4.33, 4.34, 4.35, 4.38, 4.39, 4.40, 4.41, 4.42, 4.49, 4.50, \\ & 4.51, 4.57, 4.58, 4.70, 4.72, 4.78 \\
3u^2 & 4.11, 4.17, 4.22, 4.23, 4.32, 4.61, 4.62, 4.63, 4.66, 4.67, 4.68, 4.79 \\
\end{array}
\]
and
\[
\begin{array}{r|l}
\Phi_X^{w_2}(K) & K \\ \hline
3u &  4.17, 4.22, 4.23, 4.32, 4.61, 4.62, 4.67 \\
3u^2 & 4.13, 4.15, 4.20, 4.24, 4.29, 4.34, 4.38, 4.41, 4.42, 4.49, 4.51, 4.58, 4.72 \\
3u^3 & 2.1, 3.1, 3.2, 4.4, 4.5, 4.9, 4.14, 4.26, 4.27, 4.30, 4.37, 4.44, 4.47, 4.48, 4.52, 4.54, 4.60, 4.64, 4.69, \\ & 4.74, 4.80, 4.82, 4.84, 4.91, 4.93, 4.94, 4.102 \\
3u^4 & 4.11, 4.63, 4.66, 4.68, 4.79 \\
3u^5 & 4.10, 4.18, 4.19, 4.33, 4.35, 4.39, 4.40, 4.50, 4.57, 4.70, 4.78.
\end{array}
\]
Virtual knots not listed have the trivial invariant value of $\Phi_X^W(K)=3$.
\end{example}

\section{\Large\textbf{Questions}}\label{Q}

We conclude with some questions for future research.

The big question seems to be: what is the relationship between these 
invariants and the (bi)quandle 2-cocycle and 3-cocycle invariants from 
\cite{CJKLS} and later generalizations? Despite taking four biquandle 
colors as inputs, these biquandle arrow weights do not seem to be
identifiable as 4-cocycles in a obvious way.

What conditions analogous to cohomology make two biquandle arrow weights
define the same invariant? What is the geometric meaning of these invariants?

\bibliographystyle{abbrv}
\bibliography{sam-migiwa}

\bigskip

\noindent
\textsc{Department of Mathematical Sciences \\
Claremont McKenna College \\
850 Columbia Ave. \\
Claremont, CA 91711} 

\

\noindent
\textsc{ \\
Department of Materials Science and Engineering,\\
College of Engineering, Shibaura Institute of Technology,\\
307 Fukasaku, Minuma-ku, Saitama-shi, Saitama, 337-8570, Japan
}

\end{document}